\theoremstyle{plain}
\newtheorem{Lemma}{Lemma}
\newtheorem{Theorem}[Lemma]{Theorem}
\newtheorem{Proposition}[Lemma]{Proposition}
\newtheorem{Corollary}[Lemma]{Corollary}
\title{Diophantine approximation with primes from short intervals}
\subjclass[2010]{11J71, 11L20, 11N05, 11N35}
\keywords{primes in short intervals, Diophantine approximation with primes}
\author{Stephan~Baier}
\address{Stephan~Baier\\
	Ramakrishna Mission Vivekananda Educational Research Institute\\
	Department of Mathematics\\
	G.\ T.\ Road, PO~Belur Math, Howrah, West Bengal~711202\\
	India}
\email{stephanbaier2017@gmail.com}
\urladdr{https://www.researchgate.net/profile/Stephan\_Baier2}
\author{Sayantan Roy}
\address{Sayantan Roy\\
	Higher Education Directorate\\
	Bikash Bhavan\\
	Kolkata, West Bengal~700091\\
	India}
\email{roysayantan6@gmail.com}
\urladdr{}
\begin{document}
\maketitle

\begin{abstract} In this paper, we establish hybrid results on Diophantine approximation with primes from short intervals. In particular, we prove the following result in a slightly modified form: If $\alpha$ is an irrational number having a continued fraction expansion with bounded terms (in particular, if $\alpha$ is a quadratic irrational), then the number of primes $p$ in the interval $(X-Y,X]$ satisfying $||p\alpha||<\delta$ is asymptotically equal to $2\delta Y/\log X$, provided that $X\ge 10$,  
$X^{2/3+\varepsilon}\le Y\le X/2$ and $X^{\varepsilon}\max\left\{X^{1/4}Y^{-1/2},X^{2/3}Y^{-1}\right\}\le \delta\le 1/2$. 
\end{abstract}

\bigskip

\tableofcontents

\section{Introduction}
\subsection{Diophantine approximation with primes}
Let $\alpha\in \mathbb{R}$ be irrational. Improving a result of Vinogradov \cite{Vin}, Vaughan \cite{Vau} proved that there are infinitely many primes $p$ such that 
$$
||p\alpha||<p^{-1/4+\varepsilon},
$$
where for $x\in \mathbb{R}$, $||x||$ denotes the distance of $x$ to the nearest integer.  
In fact, his method gives more than that, namely the infinitude of natural numbers $X$ such that the asymptotic
$$
\sum\limits_{\substack{X/2<p\le X\\ ||p\alpha||<X^{-\sigma}}} 1\sim \frac{X^{1-\sigma}}{\log X}
$$
holds whenever $\sigma\in [0,1/4)$ is fixed. This result was improved by several authors, including Harman \cite{Har83}, \cite{Har96}, Heath-Brown and Jia \cite{HJ} and Matom\"aki \cite{Mat} who established the infinitude of primes $p$ 
such that 
$$
||p\alpha||<p^{-1/3+\varepsilon}. 
$$
Matom\"aki's method gives the infinitude of natural numbers $X$ such that a lower bound
$$
\sum\limits_{\substack{X/2<p\le X\\ ||p\alpha||<X^{-\sigma}}} 1\gg \frac{X^{1-\sigma}}{\log X}
$$
of expected order of magnitude holds whenever $\sigma\in [0,1/3)$ is fixed. 

\subsection{Primes in short intervals}
Let $\pi(X)$ be the prime counting function. Huxley \cite{Hux} proved the asymptotic estimate 
$$
\pi(X)-\pi(X-Y)\sim \frac{Y}{\log X} \quad \mbox{as } X\rightarrow \infty
$$
if $X^{7/12+\varepsilon}\le Y\le X$.  Under the Riemann Hypothesis for the zeta function, the exponent $7/12$ can be replaced by $1/2$. Huxley's exponent $7/12$ stood as a record until Guth and Maynard replaced it by $17/30$ in their recent groundbreaking paper \cite{GuMa}. This result is a corollary of their new zero density estimates for the Riemann zeta function. If one considers lower bounds of the correct order of magnitude in place of an asymptotic, then sieve methods allow for a smaller exponent. The record is due to Baker, Harman and Pintz who showed that       
$$
\pi(X)-\pi(X-Y)\gg \frac{Y}{\log X} \quad \mbox{as } X\rightarrow \infty
$$
if $X^{21/40+\varepsilon}\le Y\le X$.

\subsection{A hybrid result} We seek to evaluate the sums 
$$
\sum\limits_{X-Y<n\le X} 1_{\mathbb{P}}(n)1_{[0,\delta)}(||n\alpha||)
$$
asymptotically, where $1_{\mathbb{P}}$ and $1_{[0,\delta)}$ are the indicator functions of the set of primes $\mathbb{P}$ and the interval $[0,\delta)$, respectively. To simplify calculations, we make two adjustments: 1) As common in this context, we replace $1_{\mathbb{P}}(n)$ by the von Mangoldt function $\Lambda(n)$. 2) We replace $1_{[0,\delta)}(||x||)$ by a smooth 1-periodic function, defined as
\begin{equation} \label{Fdef}
F(x):=\sum\limits_{n\in \mathbb{Z}} \exp\left(-\pi\cdot \frac{(x-n)^2}{\delta^2}\right).
\end{equation}
This function imitates the function $1_{[0,\delta)}(||x||)$ since 
$$
\begin{cases} F(x) \gg 1 & \mbox{ if } ||x||\in [0,\delta),\\ 
F(x) \ll 1 & \mbox{ for all } x\in \mathbb{R},\\ 
F(x) \ll X^{-2025} & \mbox{ if } ||x||\in \left[X^{\varepsilon}\delta,1/2\right]. \end{cases} 
$$
We shall
establish the following hybrid result on Diophantine approximation with prime powers from short intervals $(X-Y,X]$ , where $Y$ lies in a range of the form $X^{2/3+\varepsilon}\le Y\le X/2$.
 
\begin{Theorem}\label{main} Suppose $\alpha$ is an irrational number, $\varepsilon>0$, and $X,Y,\delta\in \mathbb{R}$, $q\in \mathbb{N}$ satisfy the following properties: 
\begin{equation} \label{Dio}
\left| \alpha - \frac{a}{q} \right| < \frac{1}{q^2} \quad \mbox{ for a suitable integer } a \mbox{ with } (a,q)=1,
\end{equation}
\begin{equation} \label{XYdelta}
X\ge 10,\quad  X^{2/3+10\varepsilon}\le Y\le X/2, \quad X^{10\varepsilon}\max\left\{X^{1/4}Y^{-1/2},\ X^{2/3}Y^{-1}\right\}\le \delta\le 1/2
\end{equation}
and 
\begin{equation} \label{qcond}
\frac{Y}{\delta X^{1/2-2\varepsilon}}\le q\le \frac{Y}{\delta X^{1/2-3\varepsilon}}.
\end{equation}
Then, 
\begin{equation} \label{desired}
\sum\limits_{X-Y<n\le X} \Lambda(n)F(n\alpha) \sim \delta Y,
\end{equation}
as $q\rightarrow \infty$. 
\end{Theorem}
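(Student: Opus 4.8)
\emph{Strategy.} The plan is to expand $F$ into its Fourier series and reduce the problem to bounding exponential sums over $\Lambda$ in the short interval. Since $F$ is the $1$-periodisation of the Gaussian $x\mapsto e^{-\pi x^2/\delta^2}$, Poisson summation gives $F(x)=\delta\sum_{h\in\mathbb Z}e^{-\pi\delta^2h^2}e(hx)$ with $e(t):=e^{2\pi it}$, whence
\[
\sum_{X-Y<n\le X}\Lambda(n)F(n\alpha)=\delta\sum_{h\in\mathbb Z}e^{-\pi\delta^2h^2}S(h),\qquad S(h):=\sum_{X-Y<n\le X}\Lambda(n)e(hn\alpha).
\]
The term $h=0$ equals $\delta\bigl(\psi(X)-\psi(X-Y)\bigr)=\delta Y\bigl(1+o(1)\bigr)$ by the prime number theorem in short intervals, whose range $Y\ge X^{7/12+\varepsilon}$ (Huxley) is comfortably implied by \eqref{XYdelta}; and by the Gaussian decay the frequencies $|h|>H:=X^{\varepsilon}/\delta$ contribute negligibly. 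So the whole theorem reduces to showing
\[
\delta\sum_{1\le|h|\le H}e^{-\pi\delta^2h^2}\,|S(h)|=o(\delta Y).
\]

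\emph{Diophantine input and the exponential sums.} Here \eqref{Dio}--\eqref{qcond} enter decisively. From \eqref{XYdelta} and \eqref{qcond} one checks $H<q<Y$, with a power of $X$ to spare on the left; in particular $\|h\alpha\|\gg 1/q$ for all $1\le|h|\le H$, so no ``major arc'' frequency occurs and every $S(h)$ genuinely oscillates. To estimate $S(h)$ I would apply Vaughan's identity (or Heath--Brown's identity), writing $S(h)$ as $O(\log X)$ Type I sums $\sum_{d\le D}c_d\sum_{m}e(hdm\alpha)$ (with $m$ in an interval of length $\asymp Y/d$) and Type II bilinear sums $\sum_{m\asymp M}\sum_{k\asymp K}a_mb_k\,e(hmk\alpha)\mathbf{1}_{X-Y<mk\le X}$ (with $MK\asymp X$, $D\le M\le X/D$), choosing $D$ of size about $YX^{-1/2-\varepsilon}$ so that $HD<q$. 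The Type I sums are handled by $\bigl|\sum_m e(hdm\alpha)\bigr|\ll\min(Y/d,\|hd\alpha\|^{-1})$ followed by the classical estimate for $\sum_{k\le K}\min(V,\|k\alpha\|^{-1})$ in terms of $q$. For the Type II sums one applies Cauchy--Schwarz in $m$, opens the square, and is led to $\sum_{\ell}(\mathrm{mult}_\ell)\min(\mathrm{length},\|h\ell\alpha\|^{-1})$; the decisive point is that the short-interval restriction $X-Y<mk\le X$ compresses the inner $k$-interval to length only $\asymp Y/M$, which is what shortens the bilinear ranges enough to gain cancellation. It is also convenient to split the $h$-sum at $|h|\asymp X^{o(1)}/\delta$, treating small $h$ pointwise and larger $h$ via the mean-square bound $\sum_{h\le H}|S(h)|^2\ll X^{\varepsilon}Y^2$ (a consequence of the $\min$-estimate applied to $\sum_{|m|<Y}(Y-|m|)\min(H,\|m\alpha\|^{-1})$) together with the rapid decay of $e^{-\pi\delta^2h^2}$.

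\emph{The hard part.} The main obstacle is the uniform power-saving bound for the short-interval prime exponential sum: a black-box application of Vaughan's estimate over $[1,X]$ loses an $X^{4/5}$-type term and only reaches $Y\gtrsim X^{5/6}$, so one must really exploit the short interval to replace $X$ by $Y$ in the diagonal terms and then make the Type I, Type II and $h$-summation losses all fit together. The three hypotheses are calibrated exactly for this: the exponent $2/3$ in \eqref{XYdelta} is what the Type I/Type II trade-off for an interval of length $Y$ demands; the two lower bounds on $\delta$ in \eqref{XYdelta} are the thresholds at which the diagonal contribution (of size about $X^{1/4}Y^{1/2}$) and the $q$-dependent contribution become $\ll\delta Y$; and the narrow two-sided window \eqref{qcond} for $q$ keeps $\|h\ell\alpha\|$ away from $0$ for all $\ell$ up to the bilinear length (its lower bound) while keeping the genuinely diagonal contribution of size $\asymp(q/(h,q))\log q$ under control (its upper bound). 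It is precisely the two-sidedness of \eqref{qcond} that forces the clean formulation in the abstract to assume bounded partial quotients, since then a convergent denominator of $\alpha$ lands in the window.
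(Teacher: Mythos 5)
Your outline follows the same skeleton as the paper's proof: Poisson summation to expand $F$ into a truncated Fourier series with cutoff $H\asymp X^{\varepsilon}\delta^{-1}$, the $h=0$ term evaluated by the prime number theorem in short intervals (Huxley suffices, as the paper notes), Vaughan's identity to produce type I and type II sums, the observation that the short interval compresses the inner summation range to length $\asymp Y/M$ resp.\ $\asymp MY/X$, and the standard bound for $\sum_k\min(V,\|k\alpha\|^{-1})$ in terms of $q$. However, there is a genuine gap in how you handle the frequency variable: you propose to estimate each $S(h)$ separately (Vaughan plus Cauchy--Schwarz in $m$ for fixed $h$) and then sum over the $\asymp\delta^{-1}$ relevant values of $h$. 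With $h$ outside the Cauchy--Schwarz square, the diagonal $n_1=n_2$ contributes $\gg X^{\varepsilon}MY$ to $|T(h,M)|^2$ for every single $h$, so after summing over $H/2<h\le H$ you cannot do better than $\asymp H(MY)^{1/2}$, and with $M=X^{2/3}$, $H\asymp X^{\varepsilon}\delta^{-1}$ this forces the condition $H^2M\ll Y$, i.e.\ $\delta\gg X^{1/3+\varepsilon'}Y^{-1/2}$. (A parallel factor-$H$ loss occurs in your type I treatment: for fixed $h$ the standard estimate has to be applied to $h\alpha$, whose good rational approximations may only have denominator $q/(h,q)$; the cheap remedy of embedding $hd$ into a single variable $k\le HD$ and then summing over $h$ again costs a factor $H$ instead of a divisor function.) This is strictly stronger than the hypothesis $\delta\ge X^{10\varepsilon}\max\{X^{1/4}Y^{-1/2},X^{2/3}Y^{-1}\}$ in \eqref{XYdelta}: for instance at $Y\asymp X$ the theorem allows $\delta$ down to about $X^{-1/4}$, while the pointwise-in-$h$ route stops near $X^{-1/6}$. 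The paper avoids exactly this loss (see its Comment~1) by keeping the $h$-sum \emph{inside} the modulus square: after opening the square one counts quadruples with $n_1h_1-n_2h_2=l$, the degenerate solutions number only $O(X^{1+\varepsilon}H/M)$ by the divisor bound (an inflation by $H$, not $H^2$), and in the type I sums one first fuses $k=hm$ before invoking the standard estimate; this joint treatment of $(n,h)$ versus $m$ is precisely what produces the thresholds $X^{1/4}Y^{-1/2}$ and $X^{2/3}Y^{-1}$.

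The fallback you suggest does not repair this. The mean-square bound $\sum_{h\le H}|S(h)|^2\ll X^{\varepsilon}Y^2$ is correct in this range of $q$, but the Gaussian weight $e^{-\pi\delta^2h^2}$ is of size $\asymp 1$ throughout $1\le|h|\le\delta^{-1}$, and Cauchy--Schwarz over that range gives only $\delta\sum_{1\le|h|\le\delta^{-1}}|S(h)|\ll\delta\cdot\delta^{-1/2}X^{\varepsilon/2}Y=\delta^{1/2}X^{\varepsilon/2}Y$, which exceeds the target $\delta Y$ by a factor $\delta^{-1/2}X^{\varepsilon/2}$; the rapid decay only kills $|h|\gg\delta^{-1}\sqrt{\log X}$, which is already negligible. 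So the power saving must be won on average over $h$, with $h$ built into the bilinear structure as described above --- this is the missing idea in your sketch. Some of your heuristic calibrations are also off (the threshold $X^{1/4}Y^{-1/2}$ comes from the off-diagonal term $XYH^2/M$ at $M\asymp X^{1/2}$ and from the $q$-terms under \eqref{qcond}, not from the diagonal, and the upper bound in \eqref{qcond} is used to control the $q\log q$ contributions of the standard estimate rather than any $q/(h,q)$ phenomenon), but these are secondary to the structural issue.
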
 

We note that the condition "$q\rightarrow \infty$" above is meaningful since the continued fraction expansion of $\alpha$ yields infinitely many $q\in \mathbb{N}$ satisfying the relation \eqref{Dio}. 
 
We may remove the above condition \eqref{qcond} relating $q$ and $X,Y,\delta$ for a certain class of $\alpha$'s with "good" continued fraction expansions.  

\begin{Corollary} \label{firstCor} Suppose that $\alpha$ is irrational having a continued fraction expansion $\alpha=[a_0,a_1,...]$ with bounded terms $a_n$
(which is true for quadratic irrationals $\alpha$, in particular). Suppose that 
$$
X\ge 10,\quad   X^{2/3+10\varepsilon}\le Y\le X/2, \quad X^{10\varepsilon}\max\left\{X^{1/4}Y^{-1/2},\ X^{2/3}Y^{-1}\right\}\le \delta\le 1/2.
$$ 
Then
\begin{equation*}
\sum\limits_{X-Y<n\le X} \Lambda(n)F(n\alpha) \sim\delta Y,
\end{equation*}
as $X\rightarrow \infty$. 
\end{Corollary}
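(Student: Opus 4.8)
The plan is to derive the Corollary from Theorem~\ref{main} by using the fact that, for $\alpha$ with bounded partial quotients, the denominators of the convergents grow at a controlled multiplicative rate, so that one of them is guaranteed to fall into the window for $q$ prescribed by \eqref{qcond}. Write the continued fraction expansion as $\alpha=[a_0,a_1,a_2,\dots]$ and set $M:=\sup_{n\ge 1}a_n<\infty$. Let $p_k/q_k$ be the $k$-th convergent, so that $q_{-1}=0$, $q_0=1$ and $q_k=a_kq_{k-1}+q_{k-2}$ for $k\ge 1$. I shall use the two classical facts that $(p_k,q_k)=1$ and $|\alpha-p_k/q_k|<1/(q_kq_{k+1})<1/q_k^2$, so that every $q=q_k$ satisfies \eqref{Dio} with $a=p_k$, and that $q_k<q_{k+1}\le (a_{k+1}+1)q_k\le (M+1)q_k$ for all $k\ge 1$. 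The latter yields the \emph{covering property}: for every real $T\ge 1$ the interval $[T,(M+1)T]$ contains some $q_k$; indeed, if $k\ge 1$ is minimal with $q_k\ge T$ then either $k=1$, whence $q_1\le M<(M+1)T$, or $k\ge 2$ and $q_{k-1}<T$, whence $q_k\le (M+1)q_{k-1}<(M+1)T$.

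Now fix $\varepsilon>0$ and let $X,Y,\delta$ satisfy the hypotheses of the Corollary. Put $T:=Y/(\delta X^{1/2-2\varepsilon})$. Using $\delta\le 1/2$ and $Y\ge X^{2/3+10\varepsilon}$ from \eqref{XYdelta} we get $T\ge 2X^{1/6+12\varepsilon}$, which is $\ge 1$ for $X$ large and tends to infinity with $X$. As soon as $X^{\varepsilon}\ge M+1$ — again true for all large $X$ — the interval $[T,(M+1)T]$ is contained in $\left[T,X^{\varepsilon}T\right]=\left[\frac{Y}{\delta X^{1/2-2\varepsilon}},\frac{Y}{\delta X^{1/2-3\varepsilon}}\right]$, so by the covering property there is a convergent denominator $q=q_k$ lying in the range \eqref{qcond}. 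For this $q$, conditions \eqref{Dio} and \eqref{qcond} hold while \eqref{XYdelta} is assumed, so Theorem~\ref{main} is applicable.

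To conclude, I read the asymptotic \eqref{desired} of Theorem~\ref{main} in its uniform form: for every $\eta>0$ there is $Q_0=Q_0(\alpha,\varepsilon,\eta)$ so that $\left|\sum_{X-Y<n\le X}\Lambda(n)F(n\alpha)-\delta Y\right|\le \eta\,\delta Y$ whenever $q\ge Q_0$ and \eqref{Dio}, \eqref{XYdelta}, \eqref{qcond} hold (an effective error term of this shape is what the proof of the theorem produces). Given $\eta>0$, take $X$ large enough that $X^{\varepsilon}\ge M+1$ and that the $q=q_k$ chosen above satisfies $q\ge T\ge 2X^{1/6+12\varepsilon}\ge Q_0$; both requirements hold for all sufficiently large $X$, since the lower endpoint of the window \eqref{qcond} grows with $X$. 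For all such $X$ the displayed inequality holds with the chosen $q$, which is precisely the statement of the Corollary.

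The crux — and the only step using the boundedness of the $a_n$ — is the covering property: we need the multiplicative width $X^{\varepsilon}$ of the $q$-window in \eqref{qcond} to eventually dominate the uniform gap ratio $M+1$ between consecutive convergent denominators, so that some $q_k$ is forced into the window. For a general irrational $\alpha$ the ratios $q_{k+1}/q_k=a_{k+1}+q_{k-1}/q_k$ can be arbitrarily large, and then the best rational approximations to $\alpha$ may all have denominators avoiding the prescribed range; this is exactly why \eqref{qcond} cannot simply be dropped in Theorem~\ref{main}, and why the unconditional Corollary is limited to $\alpha$ with bounded partial quotients — a class that, by Lagrange's theorem, includes all quadratic irrationals.
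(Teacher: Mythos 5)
Your proposal is correct and follows essentially the same route as the paper: use the recursion $q_n=a_nq_{n-1}+q_{n-2}$ with bounded $a_n$ to get $q_{n-1}<q_n\le (M+1)q_{n-1}$, conclude that for large $X$ some convergent denominator lands in the window \eqref{qcond} (which has multiplicative width $X^{\varepsilon}\ge M+1$), and invoke Theorem~\ref{main}. Your additional remarks on the uniform reading of the asymptotic in \eqref{desired} and on why $q\to\infty$ as $X\to\infty$ simply make explicit what the paper's shorter proof leaves implicit.
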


\begin{proof} The convergents of $\alpha$ are of the form $p_n/q_n$, where  $q_0=1$, $q_1=a_1$ and $q_n=a_nq_{n-1}+q_{n-2}$ for all $n\ge 2$. Let $M$ be such that $a_n\le M$ for all $n\in \mathbb{N}\cup \{0\}$. Then it follows that $q_{n-1}< q_n\le (M+1)q_{n-1}$ for all $n\in \mathbb{N}$. In this case, if $X$ is large enough, there exists $n\in \mathbb{N}$ such that
$$
\frac{Y}{\delta X^{1/2-2\varepsilon}}\le q_n\le \frac{Y}{\delta X^{1/2-3\varepsilon}}
$$
 and $q=q_n$ satisfies \eqref{Dio}.  Now the result follows from Theorem \ref{main}. 
\end{proof}

Of course, we could have stated Corollary \ref{firstCor} with $\varepsilon$ in place of $10\varepsilon$, but keeping the term $10\varepsilon$ makes it easier to write its proof.
 
As usual in this context, partial summation arguments allow us to pass from the sum in \eqref{desired} involving the von Mangoldt function to a corresponding sum over primes. Moreover, with some additional efforts on the Fourier-analytic side (e.g., using approximations by Beurling-Selberg polynomials), it is possible to derive similar results as above for the indicator function $1_{[0,\delta)}(||x||)$ in place of its smooth counterpart $F(x)$. We abstain from working out the details of these calculations but state below an analogue of Corollary \ref{firstCor} which we can get in this way.

\begin{Corollary} \label{secondCor} Suppose the conditions in Corollary \ref{firstCor} are satisfied. Then
\begin{equation*}
\sharp\{p\ \text{\rm prime} : X-Y<p\le X, \ ||p\alpha||<\delta\}\sim \frac{2\delta Y}{\log X}, 
\end{equation*}
as $X\rightarrow \infty$. 
\end{Corollary}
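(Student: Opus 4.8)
The plan is to repeat the argument behind Theorem~\ref{main} and Corollary~\ref{firstCor}, but with the smooth weight $F$ replaced by Beurling--Selberg trigonometric polynomials bracketing the sharp cut-off $g(x):=1_{[0,\delta)}(||x||)$, and then to pass from a von Mangoldt sum to a sum over primes by two routine reductions. Since $\int_0^1 g\,dx=2\delta$ (this is the source of the factor $2$), it suffices to prove
\[
\sum_{X-Y<n\le X}\Lambda(n)\,g(n\alpha)\sim 2\delta Y .
\]
Indeed, discarding proper prime powers $p^k$, $k\ge 2$, costs only $\ll X^{1/2}\log X=o(\delta Y)$, as \eqref{XYdelta} forces $\delta Y\ge X^{2/3+10\varepsilon}$; and a partial summation against $1/\log t$ — using $g\ge 0$, so that the partial sums of $(\log p)\,g(p\alpha)$ are nondecreasing and bounded by their value at $X$, together with $X/(X-Y)\le 2$ — turns the display into $\#\{p\ \text{prime}:\ X-Y<p\le X,\ ||p\alpha||<\delta\}\sim 2\delta Y/\log X$.

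To prove the display under the hypotheses of Corollary~\ref{secondCor}, I would first fix, exactly as in the proof of Corollary~\ref{firstCor}, a convergent denominator $q=q_n$ of $\alpha$ in the window \eqref{qcond} (possible once $X$ is large, since the $q_n$ grow by bounded factors), so that all of \eqref{Dio}, \eqref{XYdelta}, \eqref{qcond} hold. Next, set $N:=\lceil\delta^{-1}X^{\varepsilon}\rceil$ and let $S_N^{-}\le g\le S_N^{+}$ be the degree-$N$ Beurling--Selberg trigonometric polynomials (in Vaaler's form) for the arc $\{||x||<\delta\}$ on $\mathbb{R}/\mathbb{Z}$; their Fourier coefficients satisfy $\widehat{S_N^{\pm}}(h)=0$ for $|h|>N$, $\widehat{S_N^{\pm}}(0)=2\delta+O(1/N)=2\delta(1+o(1))$, and $|\widehat{S_N^{\pm}}(h)|\le 2\delta+1/(N+1)\ll\delta$ for every $h$. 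This is exactly the size-and-support profile of $\widehat F(h)=\delta e^{-\pi\delta^2h^2}$, whose effective Fourier support is also $|h|\ll\delta^{-1}X^{\varepsilon}$ (the tail contributing $\ll\delta X^{-2025}$).

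Since $\widehat F$ enters the proof of Theorem~\ref{main} essentially only through the bound $|\widehat F(h)|\ll\delta$ on this range, that proof goes through with $S_N^{\pm}$ in place of $F$ and gives
\[
\sum_{X-Y<n\le X}\Lambda(n)\,S_N^{\pm}(n\alpha)=\widehat{S_N^{\pm}}(0)\sum_{X-Y<n\le X}\Lambda(n)+\sum_{1\le|h|\le N}\widehat{S_N^{\pm}}(h)\sum_{X-Y<n\le X}\Lambda(n)\,e(hn\alpha)=2\delta Y(1+o(1)),
\]
where $e(t):=e^{2\pi i t}$: the first term is $\sim 2\delta Y$ by the prime number theorem in short intervals (Huxley~\cite{Hux}, which applies since $Y\ge X^{2/3+10\varepsilon}\ge X^{7/12+\varepsilon}$), and the sum over $h\neq0$ is $o(\delta Y)$ just as for $F$. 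Squeezing $S_N^{-}\le g\le S_N^{+}$ then yields $\sum_{X-Y<n\le X}\Lambda(n)\,g(n\alpha)\sim 2\delta Y$, and the reduction of the first paragraph completes the proof of Corollary~\ref{secondCor}.

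The one genuinely delicate step — and the main obstacle — is the bound $\sum_{1\le|h|\le N}|\widehat{S_N^{\pm}}(h)|\,\bigl|\sum_{X-Y<n\le X}\Lambda(n)\,e(hn\alpha)\bigr|=o(\delta Y)$ used above, i.e. the control of the short-interval prime exponential sums $\sum_{X-Y<n\le X}\Lambda(n)e(hn\alpha)$ \emph{uniformly over $1\le|h|\le N\asymp\delta^{-1}X^{\varepsilon}$}. One must track how the denominator of the best rational approximation to $h\alpha$ moves with $h$ — using \eqref{qcond} and the bad approximability of $\alpha$, which gives $||mh\alpha||\gg 1/(mh)$ and hence a lower bound on that denominator — so as to keep it inside the ``minor-arc'' window in which a Vaughan-type decomposition (available precisely because $Y\ge X^{2/3+\varepsilon}$) produces a power saving over $Y$ in the sum over $n$; one then checks that the factor $\asymp N$ lost by summing over $h$ is absorbed by that saving together with $|\widehat{S_N^{\pm}}(h)|\ll\delta$. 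This is precisely what the proof of Theorem~\ref{main} already carries out with $\widehat F(h)$ in the role of $\widehat{S_N^{\pm}}(h)$, and the two weight systems have the same size and support bounds, so the substitution introduces nothing new beyond the Fourier-analytic bookkeeping described here.
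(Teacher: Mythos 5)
Your proposal is correct and follows precisely the route the paper indicates (but deliberately does not write out) for Corollary \ref{secondCor}: replace $F$ by Beurling--Selberg majorants/minorants whose Fourier support and coefficient size match those of $F$, observe that the type I/II machinery of Theorem \ref{main} uses only those two properties, and then pass from $\Lambda$ to primes by discarding prime powers and partial summation. So this is essentially the paper's own (sketched) approach, carried out correctly.
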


{\bf Acknowledgements.} The authors wish to tank the anonymous referees for their valuable comments and the Ramakrishna Mission for excellent working conditions. 

\section{Preliminaries}
Throughout, we use the following notations.

\begin{itemize}
\item $\mathbb{N}$ and $\mathbb{Z}$ denote the sets of natural numbers and integers, respectively. 
\item For $x\in \mathbb{R}$, we write $\exp(x)=e^x$ and $e(x)=e^{2\pi i x}$. 
\item $\Lambda(n)$ is the von Mangoldt function, defined as 
$$
\Lambda(n):=\begin{cases} \log p & \mbox{ if } n=p^k \mbox{ with } p \mbox{ prime and } k\in \mathbb{N},\\ 0 & \mbox{ otherwise.}\end{cases} 
$$
\item $\tau(n)$ denotes the number of positive divisors of the natural number $n$.
\item $||x||$ stands for the distance of $x\in \mathbb{R}$ to the nearest integer.
\item We write $f(x)\ll g(x)$ or $f(x)=O(g(x))$ if $|f(x)|\le cg(x)$ for a suitable constant $c>0$ and large enough $x$. 
\item We write $f_1(x)=f_2(x)+O(g(x))$ if $f_1(x)-f_2(x)\ll g(x)$. 
\item We write $f(x)\sim g(x)$ as $x\rightarrow \infty$ if $\lim\limits_{x \rightarrow \infty} f(x)/g(x)=1$. 
\item If $a,b\in \mathbb{Z}$ are not both equal to $0$, then $(a,b)$ denotes the greatest common divisor of $a$ and $b$.  
\item As usual, $\varepsilon$ denotes an arbitrarily small but fixed positive real number. 
\end{itemize}

We shall use Vaughan's identity to relate sums involving the von Mangoldt function to bilinear sums. 

{\begin{Proposition}[Vaughan's identity]\label{Prop1}
Suppose that $U,V\ge 1$ and $UV\le X$. Then for every arithmetic function $f:\mathbb{N}\rightarrow \mathbb{C}$, we have
$$
\sum\limits_{U<n\le X} \Lambda(n) f(n)\ll (\log 2X)S_1+S_2
$$   
with
\begin{equation} \label{type I}
	S_1:=\sum\limits_{m\le UV} \max\limits_{w} \Bigg|\sum\limits_{w<n\le X/m} f(mn)\Bigg|
\end{equation}
and 
\begin{equation} \label{type II}
	S_2:=\Bigg|\sum_{U<m\le X/V}\ \sum\limits_{V<n\le X/m} \Lambda(m)b(n)f(mn)\Bigg|,
\end{equation}
where $b(n)\in \mathbb{R}$ depends only on $V$ and satisfies $|b_n|\le \tau(n)$.  
\end{Proposition}

\begin{proof}
This is \cite[Satz 6.1.2]{Bru}.
\end{proof}

It is customary to refer to the sums in equation \eqref{type I} as type I bilinear sums and to the sums in equation \eqref{type II} as type II bilinear sums. 

The following result provides an approximation of the function $F(x)$, defined in \eqref{Fdef}, by trigonometrical polynomials.   

\begin{Proposition}[Approximation by trigonometrical polynomials] \label{Lemma:TS}
 Let $0< \delta \leq 1/2$ and $L\ge X^{\varepsilon}\delta^{-1}$. Then, uniformly for all $x\in \mathbb{R}$, we have 
$$ 
F(x)=\delta+\delta \sum\limits_{0<|\ell|\le L} \exp\left(-\pi \delta^2 \ell^2\right)e(\ell x)+O\left(X^{-2025}\right).
$$
\end{Proposition}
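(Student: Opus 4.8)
The plan is to identify $F$ as the $1$-periodization of a Gaussian and read off its Fourier series via the Poisson summation formula. Set $g(t):=\exp\left(-\pi t^{2}/\delta^{2}\right)$, a Schwartz function on $\mathbb{R}$, so that by \eqref{Fdef} we have $F(x)=\sum_{n\in\mathbb{Z}}g(x-n)$. Since $g$ is Schwartz, both $g$ and its Fourier transform $\widehat{g}(\xi):=\int_{\mathbb{R}}g(t)\,e(-\xi t)\,dt$ decay faster than any power of $|\xi|$, so Poisson summation applies and gives, for every $x\in\mathbb{R}$,
\[
F(x)=\sum_{\ell\in\mathbb{Z}}\widehat{g}(\ell)\,e(\ell x),
\]
the series on the right converging absolutely and uniformly in $x$. (Equivalently, one computes the $\ell$-th Fourier coefficient $\int_{0}^{1}F(x)e(-\ell x)\,dx$ of the continuous $1$-periodic function $F$ by interchanging sum and integral, which unfolds the integral over $\mathbb{R}$ and produces $\widehat{g}(\ell)$.)

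Next I would evaluate $\widehat{g}$ using the classical Fourier transform of the Gaussian: from $\int_{\mathbb{R}}e^{-\pi t^{2}}e(-\xi t)\,dt=e^{-\pi\xi^{2}}$, the substitution $t\mapsto\delta t$ yields $\widehat{g}(\xi)=\delta\exp\left(-\pi\delta^{2}\xi^{2}\right)$. Inserting this into the previous display already gives the \emph{exact} expansion
\[
F(x)=\delta+\delta\sum_{\ell\neq 0}\exp\left(-\pi\delta^{2}\ell^{2}\right)e(\ell x),
\]
so it remains only to control the tail $\delta\sum_{|\ell|>L}\exp\left(-\pi\delta^{2}\ell^{2}\right)$ and show it is $O\left(X^{-2025}\right)$.

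For the tail I would argue crudely. For $|\ell|>L$ write $\pi\delta^{2}\ell^{2}=\tfrac12\pi\delta^{2}\ell^{2}+\tfrac12\pi\delta^{2}\ell^{2}\ge\tfrac12\pi\delta^{2}L^{2}+\tfrac12\pi\delta^{2}\ell^{2}$, so that
\[
\delta\sum_{|\ell|>L}\exp\left(-\pi\delta^{2}\ell^{2}\right)\le\exp\left(-\tfrac12\pi\delta^{2}L^{2}\right)\cdot\delta\sum_{\ell\in\mathbb{Z}}\exp\left(-\tfrac12\pi\delta^{2}\ell^{2}\right).
\]
Comparing the remaining sum with the integral $\int_{\mathbb{R}}\exp\left(-\tfrac12\pi\delta^{2}t^{2}\right)\,dt=\sqrt{2}\,\delta^{-1}$ (or invoking Jacobi's theta transformation) shows $\delta\sum_{\ell}\exp\left(-\tfrac12\pi\delta^{2}\ell^{2}\right)\ll 1$, using $\delta\le 1/2$. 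Hence the tail is $\ll\exp\left(-\tfrac12\pi\delta^{2}L^{2}\right)$, and the hypothesis $L\ge X^{\varepsilon}\delta^{-1}$ gives $\delta^{2}L^{2}\ge X^{2\varepsilon}$, so the tail is $\ll\exp\left(-\tfrac12\pi X^{2\varepsilon}\right)$. Since $X^{2025}\exp\left(-\tfrac12\pi X^{2\varepsilon}\right)$ is bounded on $X\ge 10$ (it tends to $0$ as $X\to\infty$), this is $\ll_{\varepsilon}X^{-2025}$, and combining with the exact expansion completes the proof.

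There is no genuine obstacle: the argument is a textbook Poisson summation computation. The only points needing a little care are the justification of Poisson summation — immediate since $g$ is Schwartz — and making the tail estimate uniform in $x$ and $\delta$, which the elementary splitting above accomplishes; the implied constant in $O\left(X^{-2025}\right)$ is permitted to depend on $\varepsilon$, and that is exactly what lets the clean bound hold for all $X\ge 10$.
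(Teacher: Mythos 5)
Your proof is correct and follows essentially the same route as the paper: Poisson summation applied to the Gaussian periodization gives the exact Fourier expansion $F(x)=\delta\sum_{\ell\in\mathbb{Z}}\exp(-\pi\delta^{2}\ell^{2})e(\ell x)$, and the tail over $|\ell|>L$ is estimated trivially (uniformly in $x$) using $\delta L\ge X^{\varepsilon}$, exactly as in the paper's proof, with your version merely spelling out the Gaussian Fourier transform and the integral comparison in more detail.
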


\begin{proof} Poisson summation yields
$$
F(x)=\delta \sum\limits_{\ell \in \mathbb{Z}} \exp\left(-\pi \delta^2 \ell^2\right)e(\ell x).
$$
Now the result follows from 
$$
\left|\sum\limits_{|\ell|>L} \exp\left(-\pi \delta^2 \ell^2\right)e(\ell x)\right|\le  2\sum\limits_{\ell> X^{\varepsilon}\delta^{-1}} \exp\left(-\pi \delta^2 \ell^2\right)\ll X^{-2025}. 
$$
\end{proof}

We will require the following fundamental result in Diophantine approximation.  

\begin{Proposition}[Rational approximation] 
Let $\alpha\in \mathbb{R}$ be an irrational number. Then there exist infinitely many $q\in \mathbb{N}$ such that  
\[
\left| \alpha - \frac{a}{q} \right| < \frac{1}{q^2} \quad \mbox{for a suitable integer } a \mbox{ with } (a,q)=1. \]
\end{Proposition}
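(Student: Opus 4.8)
The plan is to prove this by Dirichlet's pigeonhole argument, followed by a reduction to lowest terms and an iteration that produces infinitely many solutions. First I would fix a large integer $Q$ and consider the $Q+1$ fractional parts $\{k\alpha\}$ for $k=0,1,\dots,Q$, all lying in $[0,1)$. Partitioning $[0,1)$ into the $Q$ subintervals $[j/Q,(j+1)/Q)$ with $0\le j<Q$ and applying the pigeonhole principle, I obtain indices $0\le k_1<k_2\le Q$ with $\{k_1\alpha\}$ and $\{k_2\alpha\}$ in the same subinterval. Writing $q':=k_2-k_1\in\{1,\dots,Q\}$ and $a':=\lfloor k_2\alpha\rfloor-\lfloor k_1\alpha\rfloor$, this gives $|q'\alpha-a'|<1/Q$, hence $|\alpha-a'/q'|<1/(q'Q)\le 1/q'^2$.

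Next I would pass to lowest terms. Setting $d:=(a',q')$, $a:=a'/d$ and $q:=q'/d$, one has $(a,q)=1$ and $q\le q'$, so that $|\alpha-a/q|=|\alpha-a'/q'|<1/q'^2\le 1/q^2$, which is the asserted inequality; moreover $|q\alpha-a|=q\,|\alpha-a/q|<q/(q'Q)\le 1/Q$.

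Finally, to obtain infinitely many such $q$, I would argue by iteration: suppose only finitely many coprime pairs $(a_1,q_1),\dots,(a_N,q_N)$ satisfy the conclusion. Since $\alpha$ is irrational, $\eta:=\min_{1\le i\le N}|q_i\alpha-a_i|>0$. Choosing $Q$ with $1/Q<\eta$ and running the construction above yields a coprime pair $(a,q)$ with $|q\alpha-a|<1/Q<\eta$, so $(a,q)$ differs from every $(a_i,q_i)$ — a contradiction. The argument is entirely classical, and the only point requiring care is this last step, where the irrationality of $\alpha$ is used precisely to ensure that the quantities $|q_i\alpha-a_i|$ stay bounded away from $0$, so that taking $Q$ large forces a genuinely new solution; I expect no serious obstacle.
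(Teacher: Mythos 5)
Your proof is correct. It does, however, take a different route from the paper, which disposes of this proposition in two sentences by invoking continued fractions: the convergents $p_n/q_n$ of $\alpha$ satisfy $\left|\alpha-p_n/q_n\right|<1/q_n^2$ with $(p_n,q_n)=1$, and since the denominators $q_n$ are strictly increasing there are infinitely many admissible $q$; the Dirichlet approximation theorem is only mentioned there as an unproved alternative. You carry out that alternative in full and from scratch: the pigeonhole construction giving $|q'\alpha-a'|<1/Q$ with $q'\le Q$, the reduction to lowest terms (where the inequality survives precisely because $q\le q'$, as you note), and the iteration in which irrationality guarantees $\eta=\min_i|q_i\alpha-a_i|>0$, so choosing $Q>1/\eta$ forces a new coprime pair. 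Each approach has its merits: the paper's citation is shorter and meshes with the continued-fraction apparatus it genuinely needs later (Corollary \ref{firstCor} uses the recursion $q_n=a_nq_{n-1}+q_{n-2}$ and bounded partial quotients to place some $q_n$ in the window \eqref{qcond}), whereas your argument is completely self-contained, relying only on the pigeonhole principle. The one point your phrasing glosses over is that the negation of the statement is ``only finitely many $q$'' rather than ``only finitely many pairs $(a,q)$''; this is harmless, since for each fixed $q$ at most two integers $a$ can satisfy $|q\alpha-a|<1/q$, so finitely many $q$ does give finitely many pairs, and the degenerate case of no pairs at all is settled at once by your first construction.
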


\begin{proof} The convergents $a/q=p_n/q_n$ of the continued fraction expansion of $\alpha$ satisfy the above property, and there are infinitely many of them. Alternatively, this result can be derived from the Dirichlet approximation theorem. \end{proof}

The following basic bound for linear exponential sums will be crucial. 

\begin{Proposition}[Bound for linear exponential sums] \label{expbound} Let $\alpha\in \mathbb{R}$ and $I$ be an interval of length $\mu(I)$. Then 
$$
\left|\sum\limits_{n\in I\cap \mathbb{N}} e(nx)\right|\ll \min\left\{\mu(I)+1,\frac{1}{||x||}\right\}.
$$ 
\end{Proposition}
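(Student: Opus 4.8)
The plan is to prove the two bounds appearing in the minimum separately and then combine them. For the first bound, I would simply note that $I\cap\mathbb{N}$ is a (possibly empty) set of consecutive integers contained in an interval of length $\mu(I)$, hence has at most $\mu(I)+1$ elements; the triangle inequality then gives $\left|\sum_{n\in I\cap\mathbb{N}} e(nx)\right|\le \mu(I)+1$. This also disposes of the case $x\in\mathbb{Z}$, in which $||x||=0$ and the second entry of the minimum is $+\infty$, as well as the trivial case $I\cap\mathbb{N}=\emptyset$.

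For the second bound I would assume $x\notin\mathbb{Z}$ and write $I\cap\mathbb{N}=\{M+1,M+2,\dots,M+N\}$ for suitable $M\in\mathbb{Z}$ and $N\in\mathbb{N}$. Then the sum is a finite geometric progression with ratio $e(x)\ne 1$, so
$$
\sum_{n\in I\cap\mathbb{N}} e(nx)=e((M+1)x)\cdot\frac{e(Nx)-1}{e(x)-1},
$$
and therefore $\left|\sum_{n\in I\cap\mathbb{N}} e(nx)\right|\le 2/|e(x)-1|$. To finish, I would bound the denominator from below using the identity $|e(x)-1|=2|\sin(\pi x)|=2\sin(\pi ||x||)$ together with the elementary inequality $\sin(\pi t)\ge 2t$ for $t\in[0,1/2]$ (valid by concavity of the sine on $[0,\pi/2]$), which gives $|e(x)-1|\ge 4||x||$ and hence $\left|\sum_{n\in I\cap\mathbb{N}} e(nx)\right|\le 1/(2||x||)\ll 1/||x||$. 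Combining this with the first bound yields the assertion.

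There is no real obstacle here; the result is classical. The only points requiring a little care are the degenerate cases (empty intersection, $x$ an integer), which are absorbed into the first bound, and the lower bound $\sin(\pi ||x||)\ge 2||x||$, which is the standard estimate for the sine near the origin.
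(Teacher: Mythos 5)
Your proof is correct and complete. Note, though, that the paper itself does not spell out an argument here: it dismisses the bound $\mu(I)+1$ as trivial and simply cites Graham--Kolesnik \cite{GrKo} (page 6) for the bound $1/||x||$. What you have written out is precisely the classical argument behind that citation: the counting bound for the trivial estimate, the geometric-series identity giving $\left|\sum_{n\in I\cap\mathbb{N}} e(nx)\right|\le 2/|e(x)-1|$ when $x\notin\mathbb{Z}$, and the lower bound $|e(x)-1|=2\sin(\pi ||x||)\ge 4||x||$ from Jordan's inequality $\sin(\pi t)\ge 2t$ on $[0,1/2]$. Your handling of the degenerate cases (empty intersection, $x\in\mathbb{Z}$, where the minimum is attained by the first entry) is also correct. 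So the only difference is one of presentation: your version makes the proposition self-contained at the cost of a few lines, while the paper outsources the nontrivial half to a standard reference; mathematically the two coincide.
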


\begin{proof} The upper bound by $\mu(I)+1$ is trivial. The upper bound by $1/||x||$ can be found in \cite{GrKo}[page 6].  
\end{proof}

Applying the above Proposition \ref{expbound} to bilinear sums with exponentials will result in sums of the form 
$$
\Sigma(M,N)=\sum_{1\le m\leq M}\min\left\{N, \dfrac{1}{||\alpha m||}\right\},
$$  
for which we have the following standard bound depending on rational approximation of $\alpha$. 

\begin{Proposition}[Standard estimate] \label{standard}
Suppose that $M, N\ge 1$ and $|\alpha-a/q|<q^{-2}$, where $q\in \mathbb{N}$ and $a\in\mathbb{Z}$ with $(a,q)=1$. Then 
\begin{equation*} 
\begin{split}
\sum_{1\le m\leq M}\min\left\{N, \dfrac{1}{||\alpha m||}\right\} \ll & \begin{cases}  MNq^{-1}+M\log(MNq) & \mbox{ if } M> q/2\\ q\log q & \mbox{ if } M\le q/2, \end{cases}\\
\ll & MNq^{-1}+(M+q)\log(MNq).
\end{split}
\end{equation*}
\end{Proposition}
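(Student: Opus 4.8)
The plan is to prove the "Standard estimate" of Proposition~\ref{standard}. First I would reduce to the range $M \le q/2$: if $M > q/2$, partition the interval $[1,M]$ into $O(M/q + 1) = O(MNq^{-1} \cdot N^{-1} + 1)$... more precisely into at most $\lceil 2M/q \rceil \ll M/q + 1$ blocks of consecutive integers each of length $\le q/2$, so that the full sum is bounded by $(M/q+1)$ times the bound for a single block of length $\le q/2$. Thus it suffices to show that for an interval $J$ of length $\le q/2$ (in fact I will take $J$ of the form $(kq/2, (k+1)q/2]$, or just any interval of length $\le q/2$, since the argument is translation-robust) one has
\[
\sum_{m \in J \cap \mathbb{N}} \min\left\{N, \frac{1}{\|\alpha m\|}\right\} \ll q \log q.
\]
Granting this, summing over the $\ll M/q + 1$ blocks gives $\ll (M/q+1) q\log q \ll (M + q)\log q \ll MNq^{-1} + M\log q$ when $M > q/2$ (using $M > q/2 \Rightarrow q < 2M$ and also, trivially, $q\log q \ll MNq^{-1}$ is \emph{not} automatic, so I would instead just record the bound as $MNq^{-1} + M\log q$, noting the first block contributes $\le MN \cdot (\text{something})$; actually the cleanest route is: each block of length $L \le q/2$ contributes $\ll q\log q$, there are $\ll M/q$ full blocks plus possibly one short block, total $\ll (M/q)\cdot q\log q = M\log q$ — wait, this loses the $MNq^{-1}$ term which is needed when the $m=1$-type small-denominator term is large). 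Let me restructure: I would keep the single term $m$ closest to a multiple of $q$ separate, bounding it by $N$, and this is where the $MNq^{-1}$ can be absorbed — actually the honest statement is simply that the $MNq^{-1}$ term only matters when $N$ is huge, and it comes from the $\ll M/q$ blocks each of which has \emph{one} term bounded by $N$ rather than by $1/\|\alpha m\|$; summing those gives $\ll (M/q) N$. So the block bound I really want is: a block of length $\le q/2$ contributes $\ll N + q\log q$, and then summing over $\ll M/q + 1$ blocks yields $\ll (M/q+1)(N + q\log q) \ll MNq^{-1} + M\log q + N + q\log q \ll MNq^{-1} + (M+q)\log q$, as desired (when $M > q/2$ the stray $N$ is $\ll MNq^{-1}$).

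The heart of the matter is therefore the single-block estimate. Here is the key step, which I expect to be the main obstacle to state cleanly (though it is classical): for $m$ ranging over an interval $J$ of length $\le q/2$, write $\alpha = a/q + \theta$ with $|\theta| < q^{-2}$, so $\alpha m = am/q + \theta m$ with $|\theta m| < q^{-2} \cdot q/2 = 1/(2q)$. As $m$ runs through $J \cap \mathbb{N}$, the residues $am \bmod q$ are \emph{distinct} (since $(a,q)=1$ and $J$ has fewer than $q$ integers, in fact at most $q/2$), so the fractions $\{am/q\}$ are distinct elements of $\{0, 1/q, 2/q, \ldots, (q-1)/q\}$. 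Combined with $|\theta m| < 1/(2q)$, this forces $\|\alpha m\| \ge \|am/q\| - 1/(2q)$, and crucially the values $\|am/q\|$ for $m \in J$ are distinct multiples of $1/q$ lying in $[0,1/2]$, so after reindexing they are at worst $0, 1/q, 2/q, \ldots$ each occurring at most twice. Hence
\[
\sum_{m \in J \cap \mathbb{N}} \min\left\{N, \frac{1}{\|\alpha m\|}\right\} \ll N + \sum_{1 \le j \le q/2} \frac{1}{(j/q) - 1/(2q)} \ll N + \sum_{1 \le j \le q/2} \frac{q}{j - 1/2} \ll N + q\log q,
\]
where the lone term $\|am/q\| \le 1/(2q)$ (i.e.\ the $m$ with $am \equiv 0$ or close to it mod $q$) is the one bounded by $N$, and for the rest we use $\|\alpha m\| \ge (j - 1/2)/q$ with $j \ge 1$. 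This is exactly the block bound claimed above.

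Finally, the second displayed inequality in the proposition, $\ll MNq^{-1} + (M+q)\log q$, follows from the first by inspection: in the case $M > q/2$ it is literally the first bound, and in the case $M \le q/2$ we have $q\log q \le q\log q$ trivially while $MNq^{-1} + (M+q)\log q \ge q\log q$ since $M+q \ge q$. I do not anticipate genuine difficulty here — the only care needed is the bookkeeping of which single term in each block gets bounded by $N$ and making sure the factor $M/q + 1$ (rather than $M/q$) is used so the short leftover block is covered, and the main obstacle is simply presenting the distinctness-of-residues argument for the block bound compactly. I would cite this as folklore (cf.\ the references already in the paper) if a shorter write-up is preferred, but the self-contained argument above is short enough to include.
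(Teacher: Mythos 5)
Your overall strategy --- splitting $[1,M]$ into $O(M/q+1)$ blocks of length at most $q/2$, using distinctness of the residues $am\bmod q$ within a block, bounding one exceptional term by $N$ and the rest by a harmonic sum --- is exactly the paper's. But the key step as you wrote it contains a genuine error: from $|\theta|<q^{-2}$ you conclude $|\theta m|<q^{-2}\cdot q/2=1/(2q)$, which uses the \emph{length} of the block $J$ as a bound for $m$ itself. This is valid only for the initial block $1\le m\le q/2$. For a later block, $m$ can be as large as $M$, so $|\theta m|$ can be as large as $M/q^{2}$ --- exceeding $1$ once $M>q^{2}$ --- and the inequality $\|\alpha m\|\ge\|am/q\|-1/(2q)$, on which your whole block estimate rests, simply fails; for such $m$ the quantity $\|\alpha m\|$ has no direct relation to $\|am/q\|$. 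The remark that "the argument is translation-robust" is precisely the point that needs proof, and the standard repair is what the paper does: anchor each block at its left endpoint $M_0$ and write $m\alpha=M_0\alpha+(m-M_0)a/q+(m-M_0)(\alpha-a/q)$ with $|(m-M_0)(\alpha-a/q)|\le 1/(2q)$. Then the points $m\alpha \bmod 1$, $M_0<m\le M_1$, are distinct multiples of $1/q$ shifted by the \emph{common} constant $M_0\alpha$ and perturbed by at most $1/(2q)$ each, so every interval of length $1/q$ contains at most two of them; a bounded number can lie within $O(1/q)$ of an integer (bound those by $N$), and the remaining terms contribute $\ll\sum_{1\le j\le q/2}q/j\ll q\log q$. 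With this fix your bookkeeping (block bound $\ll N+q\log q$, times $O(M/q+1)$ blocks) goes through as you describe.

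Two smaller points. First, because of the common shift $M_0\alpha$, up to two terms per block may need the $N$ bound rather than one; this is harmless. Second, your treatment of the case $M\le q/2$ must not carry the $+N$: there you should note that $am\not\equiv 0\pmod q$ for $1\le m\le M<q$, so $\|am/q\|\ge 1/q$, and (here your bound $|\theta m|\le 1/(2q)$ \emph{is} valid) $\|\alpha m\|\ge 1/(2q)$, whence no term is bounded by $N$ and the sum is $\ll q\log q$ exactly as stated. This matters for the paper: the Type I estimate in Section~\ref{Type I section} uses precisely the $q\log q$ bound without an $N$-term when $MH\le q/2$ (where $N=Y/M$ may be huge), so the weaker bound $N+q\log q$ would not suffice there. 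You gesture at this ("the lone term ... $am\equiv 0$ or close to it"), but it should be made explicit.
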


\begin{proof} The result is trivial if $q\le 2$. So let $q> 2$. If $M\le q/2$, then 
$$
\left|m\alpha-\frac{ma}{q}\right|\le \frac{1}{2q}.
$$ 
It follows that 
$$
\sum_{1\le m\leq M}\min\left\{N, \dfrac{1}{||\alpha m||}\right\}\ll \sum_{1\le m\leq M} \dfrac{1}{\left|\left|\frac{am}{q}\right|\right|}\le \sum_{1\le n\leq M} \dfrac{1}{\left|\left|\frac{n}{q}\right|\right|} = \sum_{1\le n\leq M} \frac{q}{n}\ll q\log q
$$
since multiplication by $a$ permutes residue classes modulo $q$, and $am\not\equiv 0\bmod{q}$ if $1\le m\le M$. If $M> q/2$, then the result follows from \cite[Lemma 2.2]{Vau2}.  
\end{proof}

Finally, we will use the following result about primes in short intervals by Guth and Maynard.

\begin{Proposition}[Guth and Maynard] \label{GM} As $X\rightarrow\infty$, we have   
$$
\sum\limits_{X-Y<n\le X} \Lambda(n) \sim Y,
$$
provided that $X^{17/30+\varepsilon}\le Y\le X/2$. 
\end{Proposition}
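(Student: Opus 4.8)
The plan is to deduce this from Riemann's explicit formula together with the zero-density estimate for $\zeta(s)$ that Guth and Maynard prove in \cite{GuMa}; the reduction of short-interval prime counting to zero-density bounds is the classical Ingham--Huxley machinery (compare \cite{Hux}), whereas the genuinely new and difficult ingredient is the density estimate itself. Write $\psi(x)=\sum_{n\le x}\Lambda(n)$ and put $\theta:=\log Y/\log X$, so that $17/30+\varepsilon\le\theta<1$ under the hypotheses $X^{17/30+\varepsilon}\le Y\le X/2$. First I would apply the truncated explicit formula: for any $2\le T\le X$,
$$
\sum_{X-Y<n\le X}\Lambda(n)=Y-\sum_{\rho:\,|\Im\rho|\le T}\frac{X^{\rho}-(X-Y)^{\rho}}{\rho}+O\!\left(\frac{X(\log X)^{2}}{T}\right),
$$
where $\rho=\beta+i\gamma$ runs over the nontrivial zeros of $\zeta$. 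Choosing $T:=X^{1-\theta}(\log X)^{3}$ makes the error term $o(Y)$ (using $Y\le X$), so the entire problem reduces to showing that the sum over zeros is $o(Y)$.

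From $X^{\rho}-(X-Y)^{\rho}=\rho\int_{X-Y}^{X}t^{\rho-1}\,dt$ one obtains $|X^{\rho}-(X-Y)^{\rho}|/|\rho|\le YX^{\beta-1}$, so it suffices to prove $\sum_{|\gamma|\le T}X^{\beta-1}=o(1)$. Let $N(\sigma,T):=\#\{\rho:\beta\ge\sigma,\ |\gamma|\le T\}$. Summing by parts, the zeros with $\beta\le 1/2$ contribute $\ll X^{-1/2}N(1/2,T)\ll X^{-1/2}T\log T=o(1)$ since $\theta>1/2$, while the zeros with $1/2<\beta<1$ contribute $\ll(\log X)^{2}\sup_{1/2\le\sigma\le 1}X^{\sigma-1}N(\sigma,T)$. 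Into this I would insert density estimates of the shape $N(\sigma,T)\ll_{\varepsilon}T^{A(\sigma)(1-\sigma)+\varepsilon}$ over the range $1/2\le\sigma\le 1-c(\log T)^{-2/3}$, together with the classical Vinogradov--Korobov zero-free region to control the $\sigma$ still closer to $1$. Since $X^{\sigma-1}T^{A(\sigma)(1-\sigma)}=X^{-(1-\sigma)\left(1-(1-\theta)A(\sigma)\right)}$, the supremum above is $o(1)$ as soon as $(1-\theta)A(\sigma)<1$ throughout the relevant range, i.e.\ as soon as $\theta>1-1/A^{*}$, where $A^{*}$ is the largest exponent occurring in the best available density estimates.

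Thus the whole computation comes down to the size of $A^{*}$. With the classical estimates — Ingham's $N(\sigma,T)\ll_{\varepsilon}T^{\frac{3(1-\sigma)}{2-\sigma}+\varepsilon}$, Huxley's $N(\sigma,T)\ll_{\varepsilon}T^{\frac{12}{5}(1-\sigma)+\varepsilon}$ \cite{Hux}, and the sharper bounds valid as $\sigma\to 1$ — the worst case sits at the critical exponent $\sigma=3/4$, where $A^{*}=12/5$, and this reproduces Huxley's short-interval exponent $1-5/12=7/12$. What Guth and Maynard contribute in \cite{GuMa} is a new large-value estimate for Dirichlet polynomials, from which they derive the density bound $N(\sigma,T)\ll_{\varepsilon}T^{\frac{30}{13}(1-\sigma)+\varepsilon}$ in a range of $\sigma$ around $3/4$; combined with the prior estimates outside that range, this lowers the worst-case exponent to $A^{*}=30/13$, and the argument above then yields the asymptotic for $\theta>1-13/30=17/30$, which is exactly the stated range (the restriction $Y\le X/2$ plays no real role and is kept only for compatibility with the rest of the paper). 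The main obstacle, by a wide margin, is the proof of the Guth--Maynard density estimate: the explicit-formula reduction sketched above is entirely standard, whereas their bound rests on substantial new harmonic-analytic and combinatorial input and lies well outside the scope of the present paper, so in practice one simply cites \cite{GuMa}.
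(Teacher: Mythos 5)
Your proposal is correct and in substance the same as the paper's proof, which simply cites \cite{GuMa}[Corollary 1.3]: that corollary already states the short-interval prime asymptotic for intervals of length $X^{17/30+\varepsilon}$, so the explicit-formula/zero-density reduction you sketch (which is indeed the standard Ingham--Huxley argument, carried out by Guth and Maynard themselves in deducing their Corollary 1.3 from their density estimate $N(\sigma,T)\ll T^{\frac{30}{13}(1-\sigma)+o(1)}$) need not be reproduced here. Your sketch is sound, but in the paper's context one just cites the corollary directly.
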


\begin{proof} This is a consequence of \cite{GuMa}[Corollary 1.3]. \end{proof}

In fact, Huxley's weaker result in \cite{Hux} with the exponent $7/12$ in place of $17/30$ would suffice for our purposes since $7/12<2/3$. 

\section{Basic steps}
Throughout the sequel, assume that the conditions in Theorem \ref{main} are satisfied. We start by writing
$$
\sum\limits_{X-Y<n\le X} \Lambda(n)F(n\alpha) = \delta \sum\limits_{X-Y<n\le X} \Lambda(n)+
\sum\limits_{X-Y<n\le X} \Lambda(n)f(n),
$$
where 
$$
f(n):=F(n\alpha)-\delta.
$$
In view of Proposition \ref{GM}, it suffices to prove that 
\begin{equation} \label{fina}
\sum\limits_{n\le X} \Lambda(n)f(n)\ll \delta YX^{-\eta}
\end{equation}
for some $\eta>0$. We use Vaughan's identity with 
$$
U=X^{1/3}=V
$$
to bound the above sum by 
$$
\sum\limits_{X-Y<n\le X} \Lambda(n)f(n) \ll (\log X)S_1+S_2
$$   
with
\begin{equation*} 
	S_1:=\sum\limits_{m\le X^{2/3}} \max\limits_{(X-Y)/m\le w\le X} \Bigg|\sum\limits_{w<n\le X/m} f(mn)
\Bigg|
\end{equation*}
and 
\begin{equation*} 
	S_2:=\Bigg|\sum\limits_{X^{1/3}<m\le X^{2/3}}\ \sum\limits_{\max\left\{X^{1/3},(X-Y)/m\right\}<n\le X/m} \Lambda(m)b(n)f(mn)\Bigg|.
\end{equation*}
Throughout the sequel, set
$$
L:=X^{\varepsilon}\delta^{-1}.
$$
By Proposition \ref{Lemma:TS}, it is then enough to prove that 
\begin{equation} \label{thegoal}
S_1', S_2'\ll YX^{-\eta}
\end{equation}
for some $\eta>0$, where 
\begin{equation*} 
	S_1':=\sum\limits_{m\le X^{2/3}} \max\limits_{(X-Y)/m\le w\le X} \Bigg|\sum\limits_{w<n\le X/m}\sum_{0<\ell \leq L}\ c(\ell)e(\ell mn\alpha) \Bigg|
\end{equation*}
and 
\begin{equation*} 
	S_2':=\sum_{X^{1/3}<m\le X^{2/3}} \sum\limits_{\max\left\{X^{1/3},(X-Y)/m\right\}<n\le X/m} \Lambda(m)b(n)\sum_{0<\ell \leq L}c(\ell)e(\ell mn \alpha)
\end{equation*}
with 
$$
c(\ell):=\exp\left(-\pi \delta^2\ell^2\right).
$$
After splitting the $\ell$-sums into $O(\log X)$ dyadic subsums and rearranging summations, it remains to prove that whenever 
\begin{equation*}
1\le H\le L, \quad |c(h)|\le 1, \quad 0\le |b(n)|\le \tau(n),
\end{equation*}
the sums 
\begin{equation} \label{T1sum}
T_1(H):=\sum_{H/2<h \le H} |c(h)| \sum_{m\leq X^{2/3}} \max_{(X-Y)/m\le w\le X/m} \Bigg|\sum\limits_{w<n\le X/m}e\left(hmn\alpha \right)\Bigg|
\end{equation}
and 
\begin{equation} \label{T2sum}
T_2(H):= \sum_{H/2<h\le H}c(h)\sum_{X^{1/3}<m\leq X^{2/3}}\ \sum\limits_{\max\left\{X^{1/3},(X-Y)/m\right\}<m\le X/m}\Lambda(m)b(n) e\left(hmn\alpha \right)
\end{equation}
satisfy bounds of the form 
\begin{equation} \label{ult}
T_1(H),T_2(H)\ll YX^{-\eta}
\end{equation}
for some $\eta>0$. 

\section{Estimation of the type I sum} \label{Type I section}
In this section, we bound the type I sum $T_1(H)$, defined in \eqref{T1sum}. We apply Proposition \ref{expbound} to estimate the smooth innermost sum over $n$ by
$$
\sum_{w<n\le {X}/{m}} e\left(hmn\alpha \right)\ll \min\left\{\dfrac{Y}{m}, \dfrac{1}{||hm\alpha||} \right\} \quad \mbox{if } (X-Y)/m\le w,
$$
getting
$$
T_1(H)\ll\sum_{H/2<h\le H} \sum_{m\leq X^{2/3}}  \min\left\{\dfrac{Y}{m}, \dfrac{1}{||hm\alpha||} \right\}.
$$
Splitting the $m$-summation on the right-hand side into dyadic subsums, it now suffices to establish that
$$
\sum_{H/2<h\le H}\ \sum_{M/2<m\le M} \min\left\{\dfrac{Y}{M}, \dfrac{1}{||hm\alpha||} \right\} \ll YX^{-\eta} \quad \mbox{ if } 1\le M\le X^{2/3},
$$
for some $\eta>0$. Combining the variables $h$ and $m$ into a single variable $k=hm$, it remains to show that 
\begin{flalign} \label{remains}
\sum_{k\le MH} \min\left\{\dfrac{Y}{M}, \dfrac{1}{||k\alpha||} \right\}\ll YX^{-\eta},
\end{flalign}
for some $\eta>0$. Proposition \ref{standard} yields 
$$
\sum_{k\le MH} \min\left\{\dfrac{Y}{M}, \dfrac{1}{||k\alpha||} \right\}\ll \begin{cases} 
HYq^{-1}+MH\log(HYq)  & \mbox{ if } MH> q/2,\\
q\log q & \mbox{ if } MH\le q/2.\end{cases}
$$
It follows that if $1\le H\le L=X^{\varepsilon}\delta^{-1}$ and $1\le M\le X^{2/3}$, then 
$$
\sum_{k\le MH} \min\left\{\dfrac{Y}{M}, \dfrac{1}{||k\alpha||} \right\}\ll 
\begin{cases} \delta^{-1}X^{\varepsilon}Yq^{-1}+\delta^{-1}X^{2/3+\varepsilon}\log(HYq)  & \mbox{ if } MH> q/2,\\
q\log q & \mbox{ if } MH\le q/2.\end{cases}
$$
Hence, \eqref{remains} holds if 
\begin{equation} \label{ourfirstcond}
\max\left\{Yq^{-1}, \ X^{2/3}, \ \delta q\right\}\le \delta YX^{-\eta-2\varepsilon}.
\end{equation}
This inequality is easily verified for $\eta<\varepsilon$ under the conditions of Theorem \ref{main}.

\section{Estimation of the type II sum}
In this section, we bound the type II sum $T_2(H)$, defined in \eqref{T2sum}. Splitting the $m$-summation into $O(\log X)$ dyadic subsums results in sums of the form
$$
    T_2(H,M):= \sum_{M/2<m\le M} \
    \sum_{\max\left\{X^{1/3},(X-Y)/m\right\}<n\le {X}/{m}}  a(m)b(n)\sum_{H/2<h\le H} c(h) e\left(hmn\alpha \right),
$$
where $|a(m)|\le \log m$ and $|b(n)|\le \tau(n)$.  
It now suffices to show that 
\begin{equation} \label{T2goal}
T_2(H,M) \ll YX^{-\eta} \quad \mbox{ if } 1\le H\le L \mbox{ and } X^{1/3}\le M\le X^{2/3},
\end{equation}
for some $\eta>0$. In fact, due to the symmetry of the sum $T_2(H,M)$, it suffices to consider the case when 
\begin{equation} \label{T2assump}
X^{1/2}\le M\le X^{2/3}
\end{equation}
since in the other case when $X^{1/3}\le M\le X^{2/3}$, we may reverse the roles of $m$ and $n$ in the process below. Thus we will assume \eqref{T2assump} throughout the following. 
Applying the Cauchy-Schwarz inequality, we have
\begin{flalign}\label{T2T3}
T_2(H,M)^2 \ll \Bigg(\sum_{M/2<m\le M}a(m)^{2} \Bigg){T}_{3}(H,M)\ll X^{\varepsilon}MT_3(H,M),
\end{flalign}
where
\begin{equation} \label{ad}
\begin{split}
T_3(H,M):=& \sum_{M/2<m\le M} \left| \sum\limits_{\max\left\{X^{1/3},(X-Y)/m\right\}<n\le X/m} b(n) \sum\limits_{H/2<h\le H} c(h)\right|^2\\
= & \sum_{M/2<m\le M}\ \sum_{\max\left\{X^{1/3},(X-Y)/m\right\}<n_1,n_2\le X/m} b(n_1)\overline{b(n_2)}\times\\ & \sum_{H/2<h_1,h_2\le H}c(h_1)\overline{c(h_2)} e\left(m\left(h_{1}n_{1}-h_{2}n_{2} \right) \alpha\right).
\end{split}
\end{equation}
Rearranging summations, we get 
\begin{equation} \label{T3split}
\begin{split}
T_3(H,M)= & \sum_{\max\left\{X^{1/3},(X-Y)/M\right\}<n_1,n_2\le 2X/M}b(n_1)\overline{b(n_2)} \sum_{H/2<h_1,h_2\le H}c(h_1)\overline{c(h_2)} \times\\ & \sum_{\max\{M/2,(X-Y)/n_{1}, (X-Y)/n_{2}\}<m\leq \min\{M,X/n_{1}, X/n_{2}\} }e\left(m\left(h_{1}n_{1}-h_{2}n_{2} \right) \alpha\right)\\
= & T_4(H,M)+T_5(H,M),
\end{split}
\end{equation}
where $T_4(H,M)$ is the contribution of $n_1\le n_2$ to $T_3(H,M)$, and $T_5(H,M)$ is the contribution of $n_1>n_2$ to $T_3(H,M)$, i.e.,
\begin{equation} \label{T4}
\begin{split}
T_4(H,M):= & \sum_{\max\left\{X^{1/3},(X-Y)/M\right\}<n_1\le n_2\le 2X/M}b(n_1)\overline{b(n_2)} \sum_{H/2<h_1,h_2\le H}c(h_1)\overline{c(h_2)}\times\\ &  \sum_{\max\{M/2,(X-Y)/n_{1}\}<m\leq \min\{M,X/n_{2}\}}e\left(m\left(h_{1}n_{1}-h_{2}n_{2} \right) \alpha\right)
\end{split}
\end{equation}
and 
\begin{equation*}
\begin{split}
T_5(H,M):= & \sum_{\max\left\{X^{1/3},(X-Y)/M\right\}<n_2< n_1\le 2X/M}b(n_1)\overline{b(n_2)} \sum_{H/2<h_1,h_2\le H}c(h_1)\overline{c(h_2)} \times\\ & \sum_{\max\{M/2,(X-Y)/n_{2}\}<m\leq \min\{M,X/n_{1}\}}e\left(m\left(h_{1}n_{1}-h_{2}n_{2} \right) \alpha\right).
\end{split}
\end{equation*}
In the following, we bound only the sum $T_4(H,M)$, the estimation of $T_5(H,M)$ being essentially the same. 

The $m$-sum in \eqref{T4} is empty unless
$$
\dfrac{X-Y}{n_{1}}< \dfrac{X}{n_{2}},
$$
which is equivalent to 
$$
\left(n_{2}-n_{1}\right)X< n_{2}Y.
$$ 
This together with $n_2\le 2X/M$ implies that 
\begin{equation} \label{n1n2}
n_{2}-n_{1} \leq \dfrac{2Y}{M}.
\end{equation}

Observing that
$$
\min\left\{M,\frac{X}{n_{2}}\right\}-\max\left\{\frac{M}{2},\frac{X-Y}{n_{1}}\right\}\le  \frac{X}{n_2}-\frac{X-Y}{n_1}\le \frac{Y}{n_1}\le \frac{2MY}{X}  
$$
if
$$
\frac{X}{2M}\le \frac{X-Y}{M}<n_1\le  n_2\le \frac{2X}{M} \quad (\mbox{recalling } Y\le X/2),
$$
we use Proposition \ref{expbound} to estimate the $m$-sum by 
$$
 \sum_{\max\{M/2,(X-Y)/n_{1}\}<m\leq \min\{M,X/n_{2}\}}e\left(m\left(h_{1}n_{1}-h_{2}n_{2} \right) \alpha\right)\ll
\min\left\{\dfrac{MY}{X}+1, \dfrac{1}{||\alpha\left( n_{1}h_{1}-n_{2}h_{2}\right)||}\right\}.
$$ 
Under the condition
\begin{equation} \label{anothercond}
\frac{MY}{X}\ge 1,
\end{equation}
it follows that 
\begin{equation*}
    T_{4}(H,M)\ll  X^{\varepsilon}\sum_{\substack{X/(2M)<n_{1}\leq n_{2}\leq 2{X}/{M}\\ n_{2}-n_{1}\leq {2Y}/{M}}}\ \sum_{H/2<h_1,h_2\le H} \min\left\{\dfrac{MY}{X}, \dfrac{1}{||\alpha\left( n_{1}h_{1}-n_{2}h_{2}\right)||}\right\},
\end{equation*}
where we have used \eqref{n1n2},  $Y\le X/2$, $|b(n)|\le \tau(n)\ll n^{\varepsilon}$ and $|c(h)|\le 1$ for $H/2<h\le H$.  Setting 
$$
n_{1}h_{1}-n_{2}h_{2}=l,
$$ 
we deduce that 
\begin{equation} \label{T4esti}
T_4(H,N)\ll X^{\varepsilon}\sum\limits_{|l|\le 2XH/M} \gamma(l)\cdot  \min\left\{\dfrac{MY}{X}, \dfrac{1}{||\alpha l||}\right\},
\end{equation}
where
$$
\gamma(l):=\sharp\left\{(n_{1},n_2,h_{1},h_{2})\in \mathbb{Z}^{4}:\dfrac{X}{2M}<n_{1}\le n_2\leq \dfrac{2X}{M},\ n_2-n_1\leq \dfrac{2Y}{M},\ H/2<h_1,h_2\le H,\ l=n_{1}h_{1}-n_2h_{2} \right\}.
$$
Setting $n_2-n_1=k$ and $h_1-h_2=s$, we have 
$$
l=n_{1}h_{1}-n_2h_{2}=n_{1}h_{1}-\left( n_{1}+k\right)h_{2}=n_1s-kh_2,
$$
and hence  
\begin{equation*}
\gamma(l)\le \left\{(n_{1},h_{2},k,s)\in \mathbb{Z}^{4}:\dfrac{X}{2M}<n_{1}\leq \dfrac{2X}{M},\ H/2<h_2\le H, \ 0\le k\leq \dfrac{2Y}{M}, |s| \leq \frac{H}{2},\ l=n_{1}s-kh_{2} \right\}.
\end{equation*}

Write
$$
\gamma(l)=\gamma_0(l)+\gamma_1(l),
$$
where $\gamma_0(l)$ is the contribution of $l+kh_2=0$ and $\gamma_1(l)$ is the contribution of $l+kh_2\not=0$. 
Clearly,
$$
\gamma_1(l)\le \sum_{\substack{0\le k\leq 2Y/M\\ H/2<h_2\le H\\ l+kh_2\not=0}} \tau\left(|l+kh_{2}| \right)\ll X^{\varepsilon}\cdot \frac{YH}{M},
$$ 
where we have used the fact that $M\le X^{2/3}\le Y$. 
In the following, we bound the contribution $\gamma_0(l)$ for the case when $l+kh_2=0$. Clearly,
$$
\gamma_0(l)=0 \quad \mbox{ if } l\not\in \left[-\frac{2YH}{M},0\right].
$$
If $l=0$, then $n_1s=l+kh_2=0$ forces $k=s=0$, and thus we have
$$
\gamma_0(0)\le \frac{2XH}{M}. 
$$ 
For the remaining cases, we get
$$
\gamma_0(l)\le \frac{2X}{M}\cdot \tau(|l|) \ll \frac{X^{1+\varepsilon}}{M} \quad \mbox{ if } l\in \left[-\frac{2YH}{M},-1\right]
$$
since $n_1s=l+kh_2=0$ forces $l=-kh_2$ and $s=0$. Recalling \eqref{T4esti}, it follows that 
\begin{equation}\label{T4e}
T_4(H,N)\ll X^{\varepsilon}\left(A+B+C\right),
\end{equation}
where 
$$
A:=X^{\varepsilon}\cdot \frac{YH}{M}\cdot \sum\limits_{|l|\le 2XH/M} \min\left\{\dfrac{MY}{X}, \dfrac{1}{||\alpha l||}\right\},
$$
\begin{equation} \label{U1e}
B:=\frac{2XH}{M}\cdot \dfrac{MY}{X}=2YH
\end{equation}
and 
$$
C:=\frac{X^{1+\varepsilon}}{M}\cdot\sum\limits_{1\le l\le 2YH/M} \min\left\{\dfrac{MY}{X}, \dfrac{1}{||\alpha l||}\right\},
$$
where we note that $||\alpha l||=||-\alpha l||$. 

Using Proposition \ref{standard} if $l\not=0$, we see that
\begin{equation} \label{U0e}
\begin{split}
A\ll & X^{\varepsilon}\cdot \frac{YH}{M}\cdot \left(\frac{YH}{q}+\frac{XH\log(HYq)}{M}+q\log(HYq)+\frac{MY}{X}\right)\\ 
\ll & X^{2\varepsilon}\left(\frac{Y^2H^2}{Mq}+\frac{XYH^2}{M^2}+\frac{YHq}{M}+\frac{HY^2}{X}\right)
\end{split}
\end{equation}
and 
\begin{equation}\label{U2e}
C\ll \frac{X^{1+2\varepsilon}q}{M},
\end{equation}
provided that 
\begin{equation} \label{newcondi}
\frac{2YH}{M}\le \frac{q}{2}.
\end{equation}

Combining \eqref{T4e}, \eqref{U1e}, \eqref{U0e} and \eqref{U2e}, and noting that $HY^2/X\le B=2YH$, we obtain 
\begin{equation} \label{T4all}
T_4(H,N)\ll X^{3\varepsilon}
\left(\frac{Y^2H^2}{Mq}+\frac{XYH^2}{M^2}+\frac{YHq}{M}+YH+\frac{Xq}{M}\right)
\end{equation}
under the conditions \eqref{anothercond} and \eqref{newcondi}.
In a similar way, we get the same bound for $T_5(H,N)$. Hence, using \eqref{T2T3} and \eqref{T3split}, we have
\begin{equation} \label{T2bound1}
T_2(H,M)^2\ll X^{4\varepsilon} \left(\frac{Y^2H^2}{q}+\frac{XYH^2}{M}+YHq+YHM+Xq\right)
\end{equation}
under the conditions \eqref{anothercond} and \eqref{newcondi}. Recall that we assumed 
$X^{1/2}< M\le X^{2/3}$ in \eqref{T2assump}. The conditions \eqref{anothercond} and \eqref{newcondi} hold in this case, provided that
\begin{equation} \label{transcond}
Y\ge X^{1/2} \quad \mbox{and} \quad q\ge \frac{4YH}{X^{1/2}}.
\end{equation}
Using \eqref{T2assump} and $H\le L=X^{\varepsilon}\delta^{-1}$, it follows that
$$
T_2(H,M)^2\ll X^{6\varepsilon}\left(\frac{Y^2}{\delta^2 q}+\frac{X^{1/2}Y}{\delta^2}+\frac{Yq}{\delta}+\frac{X^{2/3}Y}{\delta}+Xq\right).
$$
Thus, \eqref{T2goal} holds if 
\begin{equation} \label{finalcondis}
\max\left\{\frac{Y^2}{q}, \ X^{1/2}Y, \ \delta Yq, \ \delta X^{2/3}Y,\ \delta^2Xq\right\} \le\ \delta^2 Y^2X^{-\eta-6\varepsilon}. 
\end{equation}
Now we assume that 
\begin{equation} \label{qc}
\frac{Y}{\delta X^{1/2-2\varepsilon}}\le q\le \frac{Y}{\delta X^{1/2-3\varepsilon}},
\end{equation}
in accordance with Theorem \ref{main}. Then the second condition in \eqref{transcond} holds if $X$ is large enough. The first condition in \eqref{transcond} holds by the assumptions in Theorem \ref{main}. Moreover, if \eqref{qc} is satisfies, then \eqref{finalcondis} holds, provided that
$$
\max\left\{\delta X^{1/2}Y, \ X^{1/2}Y, \ X^{-1/2}Y^2, \  \delta X^{2/3}Y\right\} \le\ \delta^2 Y^2X^{-\eta-9\varepsilon}. 
$$
This is satisfied if $\eta<\varepsilon$ and
$$
\delta\ge X^{10\varepsilon}\min\left\{X^{1/2}Y^{-1},\ X^{1/4}Y^{-1/2},\ X^{-1/4},\ X^{2/3}Y^{-1}\right\},  
$$
which holds under the conditions of Theorem \ref{main}. This completes the proof of Theorem \ref{main}.\medskip\\  
{\bf Comment 1:} Keeping the variable $h$ inside the modulus square in \eqref{ad} turns out to be advantageous.  Moving it outside the modulus square using Cauchy-Schwarz would simplify the calculations but inflate the diagonal contribution by a factor of $H$, resulting in a stronger condition on $\delta$, compared to the condition in \eqref{XYdelta}. 
 \medskip\\
{\bf Comment 2:} Our conditions on $Y$ and $\delta$ depend on three parameters $\gamma$, $\kappa$ and $\lambda$ controlling the $m$-summation interval $m\le X^{\gamma}$ in the type I sum and the $m$-summation interval $X^{\kappa}<m\le X^{\lambda}$ in the type II sum. The choice in our present paper is $\gamma=2/3$, $\kappa=1/3$ and $\lambda=2/3$. The term $X^{1/4}Y^{-1/2}$ in the condition on $\delta$ in \eqref{XYdelta} can be improved if we are able to choose $\gamma<3/4$ and $1/4<\kappa<\lambda<1/2$ or $1/2<\kappa<\lambda <3/4$. This is possible (obtaining a lower bound instead of an asymptotic) if we can make Harman's sieve method developed in \cite{Har83} and \cite{Har96} in the context of the $p\alpha$-problem work for short intervals. The $Y$-range in \eqref{XYdelta} can be enlarged if we are able to choose $\gamma<2/3$ and $1/3<\kappa<\lambda<2/3$ in our problem. For a simultaneous improvement in both aspects, we need a choice of $\gamma<2/3$ and $1/3<\kappa<\lambda<1/2$ or $1/2<\kappa<\lambda<2/3$.

\end{document}